\newtheorem{thm}{Theorem}[section]
\newtheorem{thm*}{Theorem}[section]
\newtheorem{lem}{Lemma}[section]
\newtheorem{lem*}{Lemma}[section]
\theoremstyle{definition}
\newtheorem{cor}{Corollary}[section]
\newtheorem{exmp}{Example}[section]
\title{Colorings of some Cayley graphs}
\author{Prajnanaswaroopa S\\
sntrm4@rediffmail.com}
\date{sntrm4@rediffmail.com}
\begin{document}

	\maketitle
\textbf{Abstract:} Cayley graphs are graphs on algebraic structures, typically groups or group-like structures. In this paper, we have obtained a few results on Cayley graphs on Cyclic groups, powers of cycles, Cayley graphs on some non-abelian groups, and vertex, edge and total colorings of Cayley graphs on gyrogroups.
 
 \textbf{Keywords:}Cayley Graphs, Total Coloring, Symmetric Groups, Gyrogroups.
\section{Introduction}
	For a simple loopless graph $G$, we denote by $V(G)$ and $E(G)$ the vertex and edge sets of the graph, respectively. A $k$-vertex coloring of a graph $G$ is a map $c:V(G)\to\{1,2,\ldots,k\}$ such that $c(v_i)\neq c(v_j)$, where $v_i, v_j\in V(G)$ are adjacent vertices. The minimum $k$ required to color vertices is called the chromatic number of $G$, denoted by $\chi(G)$.
	
	Edge coloring of a graph $G$ is the proper coloring of the edges of $G$ such that no two edges incident on the same vertex receive the same color. It can also be interpreted as the vertex coloring of its line graph, $L(G)$. In terms of mappings, a $k$-edge coloring of $G$ is a map $c:E(G)\to\{1,2,\ldots,k\}$ such that $c(e)\neq c(e')$ for any two incident edges $e,e' \in E(G)$. The minimum $k$ required in such a coloring is the edge chromatic number, or the chromatic index of $G$, denoted by $\chi'(G)$. By Vizing's theorem (\cite{VIZ}), it is known that $\chi'(G)$ is either $\Delta(G)$ or $\Delta(G)+1$, where $\Delta(G)$ is the maximum degree of the graph $G$. The graphs $G$ with $\chi'(G)=\Delta(G)$ are said to be of class I, and those with $\chi'(G)=\Delta(G)+1$ are said to be of class II.
	
	The total Coloring of a graph $G$ is the Coloring of the elements of $G$ such that no two adjacent vertices, two adjacent edges, or an edge and its incident vertices receive the same color. In other words, a $k$-total coloring is a map $c:V(G)\cup E(G)\to\{1,2,\ldots,k\}$ such that $c(u)\neq c(v)$ for any two adjacent vertices $ u, v\in V(G) $,$c(e)\neq c(e')$ for any two incident edges $e,e' \in E(G)$ and $c( v)\neq c(e)$ for any vertex $ v\in V(G)$ and any edge $ e\in E(G)$ incident to $v$. The minimum $k$ required in such a coloring is called the total chromatic number of the graph, denoted by $\chi''(G)$. A trivial bound on total Coloring is that $\chi''(G)\ge\Delta(G)+1$, where $\Delta(G)$ is the maximum degree of $G$. Total Coloring Conjecture(TCC) is the assertion that $\chi''(G)\le\Delta(G)+2$ (\cite{BEH}, \cite{VIZ1}). The graphs with $\chi''(G)=\Delta(G)+1$ are called type I, and those with $\chi''(G)=\Delta(G)+2$ are said to be type II. 
	
	In this paper, we obtain some bounds on the total chromatic number of some Cayley graphs on symmetric groups and powers of cycles, which are a class of Cayley graphs on Cyclic groups. We also obtain bounds on the chromatic number, chromatic index, and total chromatic number of some classes of Cayley graphs on gyrogroups. The Cayley graphs on a group/ gyrogroup $\Gamma$ with symmetric generating set $S$ (a set is called symmetric if both $s$ and $s^{-1}$ both belong to $S$) will be denoted by $C(\Gamma, S)$. As all graphs are loopless, $S$ does not have the identity element of the group/gyrogroup. The symmetric group, which is the group of all bijective functions from an $n$-element set to itself, will be denoted by $S_n$. $ C_n^k$ will denote the $k$-th power of $n$-cycle.
	
	\section{Some Results on Cayley graphs of non-abelian groups}
	Very few results on total coloring Cayley graphs on non-abelian groups have been known. Some related results can be found in (\cite{PRA2}). Here is a continuation of those results. We begin with a lemma.
 \begin{lem}
 If $x\in S_n$ and $1\le i<n$ is an integer, and  $x=(1,2,\ldots,n)^i(1,2)$, then $x=s(1,2,\ldots,n)^j$ for some $1\le j<n$ and $s\in S_{n-1}$.
 \end{lem}
 \begin{proof}
 We see that $(1,2,\ldots,n)^i$, $1\le i<n$ takes $n-i$ to $n$ and $n$ to $i$. Therefore, we can always choose $k$ in such a manner that $(1,2,\ldots,n)^i(1,2)(1,2,\ldots,n)^k$ keeps $n$ fixed. For $i=1$, we can take $k=n-2$, for $i=2$, we can take $k=n-1$. In all other cases,$i\leq 3 < n$, we can take $k=n-i$. Choosing $j=n-k$ gives us the desired conclusion with $s=(1,2,\ldots,n)^i(1,2)(1,2,\ldots,n)^{n-j}$. 
 \end{proof}
  \begin{thm}
	 	The graph $G=C(S_n,S)$ with\ $S=\{(1,2),(1,2,\ldots,n),(1,n,\ldots,2) \}$ with $3|n$ is type I.
	 \end{thm}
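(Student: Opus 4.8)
The plan is to realise $G=C(S_n,S)$ as a cubic graph carrying a particularly convenient $2$-factor and then to read off a total $4$-colouring from it. Write $a=(1,2,\ldots,n)$ and $b=(1,2)$. Since $b^{2}=e$ while $a,a^{-1}$ are distinct elements of order $n$ (note $n\ge 3$ as $3\mid n$), the graph is $3$-regular, so $\Delta(G)=3$ and therefore $\chi''(G)\ge\Delta(G)+1=4$; it remains to produce a total $4$-colouring. The edges arising from $a$ and $a^{-1}$ form a spanning $2$-regular subgraph $H$ whose connected components are the left cosets $g\langle a\rangle$, each of which is a cycle of length exactly $n$, and there are $(n-1)!$ of them. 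The edges arising from $b$ form a perfect matching $M=E(G)\setminus E(H)$; here one uses $b^{2}=e$ and $b\notin\langle a\rangle$, so that $M$ has no multiple edges, is edge-disjoint from $H$, and in fact always joins vertices lying in two distinct cycles of $H$.

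Next I would total-colour each $n$-cycle of $H$ using only three colours, say $\{1,2,3\}$, which is possible exactly because $3\mid n$: on a cycle $v_{0}v_{1}\cdots v_{n-1}$ one sets $c(v_{i})\equiv i\pmod 3$ and $c(v_{i}v_{i+1})\equiv i+2\pmod 3$, which closes up consistently around the cycle and is a proper total colouring. The decisive point is that at each vertex $v_{i}$ the vertex itself and its two incident cycle-edges receive the three residues $i,\ i+1,\ i+2 \pmod 3$, i.e.\ all three colours. Hence no edge of the matching $M$ can be coloured from $\{1,2,3\}$, and it must receive a fourth colour $4$; conversely, assigning colour $4$ to every edge of $M$ is legitimate, because $M$ is a matching (so incident matching edges never clash) and colour $4$ appears on no vertex and on no cycle-edge of $H$. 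This yields a total $4$-colouring of $G$, and together with the trivial lower bound we conclude $\chi''(G)=4$, so $G$ is type I.

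I do not anticipate a genuine obstacle: the content is essentially the observation, implicit above, that any cubic graph admitting a $2$-factor in which every cycle has length divisible by $3$ is type I, and the hypothesis $3\mid n$ is precisely what makes the canonical $2$-factor coming from the $n$-cycle generator satisfy this. The only points requiring real care are bookkeeping ones — checking that each coset $g\langle a\rangle$ induces a cycle of length exactly $n$ rather than a shorter closed walk or a multiple edge, that $M$ is indeed a perfect matching edge-disjoint from $H$ (which reduces to noting that $\langle a\rangle$, being cyclic of order $n\ge 3$ generated by a full $n$-cycle, contains no transposition), and that the rotational total $3$-colouring of $C_n$ wraps around correctly when $3\mid n$. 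All of these are routine verifications.
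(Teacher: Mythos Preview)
Your argument is essentially the paper's: decompose $G$ into the $2$-factor $H$ of left $\langle a\rangle$-cosets (each an $n$-cycle) and the perfect matching $M$ coming from $b=(1,2)$, put a rotational total $3$-colouring on every cycle of $H$, and use a fourth colour on $M$. But both your write-up and the paper's omit the same step: nowhere is it checked that the two endpoints of a matching edge receive \emph{different vertex colours}. You carefully verify that colour $4$ on $M$ creates no edge--edge or edge--vertex conflict, yet the vertex--vertex condition along $M$ is simply not addressed; if $c(g)=c(gb)$ for some $g$, the colouring is not total. The paper's justification ``$g a^{i}\neq gb$'' only says $b\notin\langle a\rangle$, i.e.\ that $M$ runs between distinct cosets --- it says nothing about colours.

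This is a genuine gap, not bookkeeping: the general principle you state, that \emph{any} cubic graph with a $2$-factor whose cycles all have length divisible by $3$ is type~I, is false. Take $K_{3,3}$ with its Hamiltonian $6$-cycle as $2$-factor; the complementary matching consists of the three antipodal pairs $v_iv_{i+3}$, and every rotational colouring $c(v_i)\equiv \pm i+s\pmod 3$ gives $c(v_i)=c(v_{i+3})$, so the scheme cannot produce a proper total $4$-colouring --- and indeed $\chi''(K_{3,3})=5$. So something specific to the Cayley structure must be used to \emph{coordinate} the $3$-colourings across the $(n-1)!$ cosets so that $c(g)\neq c(gb)$ for all $g$; already for $n=3$ (where $G$ is the triangular prism) one checks that the three cyclic shifts on the second triangle all fail and one must reflect. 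Until such a coordination argument is supplied, the proof is incomplete.
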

	 \begin{proof}
	We observe that the graph $G$ is a disjoint union of a perfect matching generated by the element $(1,2)$ and cycles generated by the elements $(1,2,\ldots,n)$ and $(1,n,\ldots,2)$. In order to perform the total Coloring, we first color the matching formed by the element $(1,2)$ by a unique color. The remaining graph is  a union of disjoint cycles. Now, in order to color the graph totally, we take left cosets of the cyclic group $H=\langle(1,2,\ldots,n)\rangle$, which has order $n$ with respect to $S_n$. The cosets are labelled as $g_iH$, where $g_i\in S_{n-1}$ and $i$ ranges from $1$ to $(n-1)!$.  Using Lemma 2.1, $G$ consists of disjoint $n$ cycles ($(n-1)!$ copies), which correspond to the vertices of subgroup $H$ and any two copies are joined by a unique edge with respect to the generating element $(1,2)$. By using the result on (\cite{CHEN}) equitable $\Delta(G)$ coloring for $\Delta(G)\le 3$, we can equitably three color the vertices of $G$. In addition, this structure shows that if the 1-factor due to the generating element $(1,2)$ is removed, we have a disjoint union of $n$ cycles which correspond to $(n-1)!$ copies of the subgraph induced by the subgroup $H$. The edges of the disjoint union of cycles can always be colored by the same three colors used to color the vertices because we know that if $3|n$, then every $n$ cycle is $3$ total colorable with arbitrary vertex coloring. Therefore, we can color the graph $G$ totally using $4$ colors; in other words, $G$ is type I.
	 \end{proof}
	\section{Some results on Powers of Cycles}
	It is proved that all graphs $C_n^k$ with $n=s(2m+1)-1\quad, \ \frac{k}{2}\le m\le k, \ s-\text{even}$ satisfies TCC \cite{PRA}. We show some related results in the below discussions. Some regard conformability, and others focus on the total Coloring of powers of cycles.  
	The following two theorems are also proved in \cite{ZOR}. We provide a shorter proof.
	\begin{thm}
		Every graph $G=C_n^k$ with $n$ even is conformable. 
	\end{thm}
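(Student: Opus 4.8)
The plan is to use the parity of $n$ to fold $C_n^k$ onto a smaller power of a cycle by identifying antipodal vertices. Recall that a graph $H$ is \emph{conformable} if it admits a proper colouring with $\Delta(H)+1$ colours in which the number of colour classes whose size has parity different from $|V(H)|$ is at most $\mathrm{def}(H)=\sum_{v}(\Delta(H)-d_H(v))$. I take $C_n^k$ to be the genuine $2k$-regular circulant, i.e.\ $n\ge 2k+2$; for $n$ even this only rules out the complete graphs $K_n$, which are in any case not conformable. Then $G=C_n^k$ has $\mathrm{def}(G)=0$, and since $|V(G)|=n$ is even the task reduces to exhibiting a proper colouring of $G$ with at most $\Delta(G)+1=2k+1$ colours in which every colour class has even cardinality (empty classes, of size $0$, being harmless).

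Next I set $m=n/2$, note $m\ge k+1$, and pair each vertex $i\in\mathbb{Z}_n$ with its antipode $i+m$. Since the cyclic distance in $\mathbb{Z}_n$ between $i$ and $i+m$ is exactly $m\ge k+1>k$, every antipodal pair $\{\,i,\,i+m\,\}$ is independent in $C_n^k$. A short distance computation — comparing, for two such pairs $\{i,i+m\}$ and $\{j,j+m\}$, the four pairwise cyclic distances in $\mathbb{Z}_n$ — shows that their union is independent in $C_n^k$ precisely when the images of $i$ and $j$ are non-adjacent in $C_m^k$, the $k$-th power of the $m$-cycle on the quotient $\mathbb{Z}_n/\langle m\rangle\cong\mathbb{Z}_m$. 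Hence any proper colouring of $C_m^k$ lifts to a proper colouring of $C_n^k$ (give both $i$ and $i+m$ the colour of $\bar i$), and each class of the lift, being a union of antipodal pairs, has even size.

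To finish I would colour $C_m^k$ with few enough colours, where the trivial bound suffices: $\Delta(C_m^k)=\min\{2k,\,m-1\}\le 2k$, so $\chi(C_m^k)\le\Delta(C_m^k)+1\le 2k+1$. Lifting any such colouring yields a proper $(2k+1)$-colouring of $C_n^k$ all of whose classes are even, so $G$ is conformable.

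The one step that needs genuine care — the main obstacle, modest as it is — is the verification that antipodal identification turns $C_n^k$ into exactly $C_m^k$: one must check that for $a\not\equiv 0\pmod m$ the two relevant distances in $\mathbb{Z}_n$ are $a'$ and $m-a'$ with $a'=a\bmod m$, so that the merging rule is precisely ``cyclic distance $\ge k+1$ in $\mathbb{Z}_m$''. This — together with the inequality $m\ge k+1$ that makes the antipodal pairs independent in the first place, and which holds exactly because $n$ is even (and $\ge 2k+2$) — is where the hypothesis on $n$ is used; the remaining steps are immediate.
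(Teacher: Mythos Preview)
Your argument is correct, and it takes a cleaner route than the paper's. The paper also starts from the antipodal pairing $\{i,\,i+n/2\}$, but then proceeds by a case split: for $k+1\ge n/4$ it simply assigns each antipodal pair its own colour (this is your construction specialised to the trivial $m$-colouring of $C_m^k$), while for larger $n$ it abandons the quotient picture and instead manufactures an explicit partition with some classes of size $2^m$ and the rest of size $2$. Your observation that identifying antipodes produces exactly $C_m^k$, together with the greedy bound $\chi(C_m^k)\le\Delta(C_m^k)+1\le 2k+1$, handles every even $n\ge 2k+2$ in one stroke and makes the even parity of all classes automatic. This is shorter and more conceptual, and it sidesteps the boundary bookkeeping that the paper's case division entails; the paper's version, by contrast, never needs to verify the quotient identification and is in that sense more elementary but less unified.
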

	\begin{proof}
		If we have $k+1\ge\frac{n}{4}$, then we could divide the vertices as $[0,\frac{n}{2}],[1,\frac{n}{2}+1],\ldots,[\frac{n}{2}-1,n-1]$ to get a conformable coloring of the vertices. On the other hand, if $k+1<\frac{n}{2^m}\,,m\ge2$, we can divide the vertices into $x+y$ color classes, where $x=\lfloor\frac{n}{2^m}\rfloor$ and $y=\frac{n-\lfloor\frac{n}{2^m}\rfloor}{2}$. We have $2^m$ vertices in $x$ color classes and $2$ vertices in $y$ color classes. The Coloring is conformable, as all the independent sets so divided have even parity (including the null independent sets). 
	\end{proof}
	\begin{thm}
		If $k+1<\frac{n}{3}$, then the power of cycle graphs $C_n^k$ are conformable.
	\end{thm}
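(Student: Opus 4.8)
The plan is to exhibit, for every $n$ with $k+1<n/3$, a proper vertex colouring of $C_n^k$ using exactly $\Delta+1=2k+1$ colours in which every colour class has cardinality $\equiv n\pmod 2$; since $C_n^k$ is $2k$-regular, any such colouring certifies conformability. For $n$ even this is the preceding theorem, so the substantive case is $n$ odd, where the condition is more rigid: \emph{every} one of the $2k+1$ colours must appear, and an \emph{odd} number of times (an unused colour is a class of even size $0$, which is forbidden). Write $q=\lfloor n/(k+1)\rfloor$ and $n=q(k+1)+r$ with $0\le r\le k$. The hypothesis $k+1<n/3$ says $3(k+1)\le n-1$, hence $q\ge 3$, so along the cycle there are independent sets — sets all of whose successive circular gaps are $\ge k+1$ — of every size from $1$ up to $q$; in particular progressions of length $3$ are available, and the colouring we build will use only the $2k+1$ colours asked for.

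The first step is to pin down the target class sizes. One can pick odd integers $s_1,\dots,s_{2k+1}$, each at most $q$ (at most $q-1$ when $q$ is even, so that they remain odd), with $\sum_i s_i=n$: begin with all entries equal to $1$, total $2k+1$, and increase entries by $2$ — keeping them odd and below the cap — until the total is $n$. This succeeds because $n-(2k+1)$ is even and because $(2k+1)q\ge n$, the latter from $n<(q+1)(k+1)\le(2k+1)q$ for $q\ge 3$; a one-line check disposes of the capped value $q-1$ in the even-$q$ case. The second step is to realise a chosen sequence as an honest colouring. When $q=3$ this is transparent: each $s_i$ equals $1$ or $3$, so $a:=\#\{i:s_i=3\}=(n-2k-1)/2$, which lies between $1$ and $k+1$ since $3k+3\le n\le 4k+3$; take the size-$3$ classes to be $\{j,\ j+(k+1),\ j+2(k+1)\}$ for $j=0,1,\dots,a-1$ — pairwise disjoint because $a\le k+1$ and (as $j+2(k+1)\le 3k+2<n$) none wraps around — and colour the remaining $n-3a$ vertices with distinct singleton colours. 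Every two vertices of the same colour are then at circular distance $\ge\min\{k+1,\ n-2(k+1)\}=k+1>k$, so the colouring is proper.

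For $q>3$ I would follow the same idea, packing $\mathbb Z_n$ with a few long progressions of odd length (length $q$ or $q-1$), a block of length-$3$ progressions, and singletons, all of common difference $k+1$ and pairwise disjoint because their offsets are distinct modulo $k+1$; how many progressions of each length are needed is fixed by a short linear computation whose solvability in nonnegative integers is exactly what the parity of $n$ provides, via a constraint on $r$. When $q$ is large relative to $k$ there are too few offsets modulo $k+1$ for that layout, and instead one spreads the long classes around the cycle with difference about $n/q$ and alternates the remaining colours on the arcs between consecutive vertices of the long classes, choosing each arc's length and its starting colour to make the class sizes odd. The crux — and the step I expect to be the main obstacle — is exactly this realisation for general $q$: checking, uniformly in $n$ and $k$, that a prescribed multiset of odd class sizes can be placed with all monochromatic pairs at circular distance $\ge k+1$. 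I expect the cleanest treatment is to extract it as a lemma, namely that any positive integers $s_1,\dots,s_t$ with $\sum_i s_i=n$ and $\max_i s_i\le\lfloor n/(k+1)\rfloor$ are the class sizes of some proper colouring of $C_n^k$, proved by placing the classes in decreasing order of size, each spread as evenly as possible over the arcs left by the earlier classes, while keeping every leftover arc of length at least $k$. The remaining ingredients — the reduction to odd $n$, the choice of sizes, the parity bookkeeping — are routine.
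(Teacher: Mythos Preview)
Your approach is quite different from the paper's, and the gap you yourself flag in it is real. The paper never attempts to realise a prescribed multiset of odd class sizes. For odd $n$ it simply places the first $2k+1$ vertices $0,1,\dots,2k$ into $2k+1$ singleton colour classes, notes that the remaining $n-(2k+1)$ vertices --- an even number, since $n$ is odd --- induce the path power $P_{n-2k-1}^k$, and then appeals to the preceding even-order theorem to colour that block conformably, i.e.\ with every class of even size (zero allowed). Merging the two pieces yields class sizes $1+\text{even}=\text{odd}$ throughout, which is exactly what conformability for odd $n$ demands. The realisation problem you isolate as ``the main obstacle'' is thus sidestepped entirely; the odd case is reduced to the even case in one stroke.

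Your route would buy more if it went through: explicit colourings with prescribed class-size profile, and a standalone packing lemma of independent interest. But as written it is incomplete precisely where you say. The $q=3$ case is fine; for $q>3$ you sketch two regimes (a few long arithmetic progressions of step $k+1$ together with size-$3$ blocks when $q$ is moderate, and evenly spread long classes with alternating arcs when $q$ is large) without verifying either, and then defer everything to an unproved lemma asserting that \emph{any} size profile with $\max_i s_i\le\lfloor n/(k+1)\rfloor$ is realisable as the class sizes of a proper colouring of $C_n^k$. That lemma is plausible but not obvious --- the greedy ``place classes in decreasing order, spread evenly over the remaining arcs'' heuristic needs a maintained invariant that no leftover arc drops below length $k$, and this is exactly the kind of statement that fails at boundary parameters unless argued carefully. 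It is also considerably stronger than what the theorem needs. The paper's shortcut shows the whole difficulty is avoidable: once the even case is in hand, peel off one vertex per colour and the odd case follows.
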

	\begin{proof}
		By the previous theorem, we need to only consider the case when $n$ is odd. The given condition implies that the independence number of the graph $C_n^k$ is greater than or equal to $3$. Therefore, we can put $3$ vertices in an independent set. This condition is essential because $3$ is the minimum odd number after $1$, and the complete graphs of odd order are the only regular and conformable graphs having exactly $1$ vertex in all the independent sets. As the parity of $n$ is odd, we must have no null independent sets, for $0$ has even parity. So, let us first divide the first $2k+1$ vertices into $2k+1$ independent sets. Now, the induced graph formed by the remaining $n-(2k+1)$ vertices can be thought of as an induced subgraph of $k$-th power of a cycle of even order (it is an even power of path $P_{n-(2k+1)}^k$). Since we have proved that the even powers of cycles are vertex conformable, this implies that the induced subgraph is also vertex conformable. Thus, the graph $G$, in this case, is also vertex conformable, as we can arrange the remaining $n-(2k+1)$ vertices like the conformable Coloring of the induced graph of $k$-th power of even cycle, which ensures odd vertices in all the independent sets of $G$. 
	\end{proof}
	The above fact of conformability is a strong evidence in favor of the Campos-de Mello conjecture \cite{CAMP}. Though there are conformable graphs that are not type I, the high symmetry of powers of cycles makes it appear that conformability implies type I for these graphs.
	
	\begin{thm} If $n=m(k+1)+1$ for some even integer $m$ and odd $k$, then $G=C_n^k$ satisfies TCC.
	\end{thm}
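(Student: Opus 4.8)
The plan is to build an explicit total coloring of $G=C_n^k$ with $2k+2$ colors; since $n=m(k+1)+1\ge 2k+3$ the graph is $2k$-regular with $\Delta(G)=2k$, and $n$ is odd (because $k$ odd makes $k+1$ even, so $m(k+1)$ is even), so this is exactly the content of TCC for $G$. The approach runs parallel to the treatment of the quoted case $n=s(2m'+1)\pm1$ with $s$ even: one lays down a uniform periodic total coloring that is already complete when $k+1\mid n$, and then repairs it inside a single bounded window around the ``seam'' created by the extra $+1$. The new point is that the blocks now have the even size $k+1$, which is precisely where the hypothesis $k$ odd is used.

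For the periodic part I would work with colors in $\mathbb{Z}_{2k+2}$: reading the vertices in the linear order $0,1,2,\ldots$, color vertex $i$ by $2i$ and color the edge joining $i$ to $i+d$ (for $1\le d\le k$) by $2i+d$. A direct check shows that the $2k$ edges at a vertex get the $2k$ distinct colors $\{2i-k,\ldots,2i+k\}\setminus\{2i\}$, so this is a proper edge coloring leaving $2i$ free at $i$, and adjoining $2i$ as the vertex color keeps everything proper; moreover each of the $2k+2$ colors is used exactly $(k+1)/2$ times per block of $k+1$ consecutive vertices together with their incident edges, which is an integer precisely because $k$ is odd. When $k+1\mid n$ one has $2k+2\mid 2n$, the formulas are globally consistent on $\mathbb{Z}_n$, and the result is already a $(2k+2)$-total coloring; I would isolate this as a lemma.

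When $n=m(k+1)+1$ the construction fails only near the seam between vertex $n-1$ and vertex $0$: these are adjacent but both receive color $0$, and, because $2n\equiv 2$ rather than $0$ modulo $2k+2$, the roughly $k(k+1)/2$ edges that cross the seam come out with colors that clash with the edges at vertex $0$. The repair is to keep the periodic coloring outside the window $W$ formed by the vertices $n-k,\ldots,n-1,0,\ldots,k-1$ and the edges joining them, and to recolor inside $W$ with the full palette of $2k+2$ colors so as to match the fixed periodic colors on the boundary of $W$. Carrying this out is the main obstacle: $W$ is a small ``broken block'' of staircase shape, and whether it closes is a parity question --- the defect accumulated over the $m$ blocks must be an even quantity for the seam to absorb it, which is where $m$ even enters, and keeping each recolored vertex off its $2k$ incident edges while preserving the color balance again needs $(k+1)/2\in\mathbb{Z}$. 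As a fallback, one can instead induct on $m$ in steps of two, splicing a pre-colored two-block piece of $2(k+1)$ vertices into a $(2k+2)$-total coloring of $C_{(m-2)(k+1)+1}^k$ across one edge and recoloring only a bounded neighborhood of the splice; here the base case $m=2$ is the single graph $C_{2k+3}^k$, which is nothing but the complement of a cycle, $\overline{C_{2k+3}}$.
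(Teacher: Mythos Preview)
Your periodic construction in $\mathbb{Z}_{2k+2}$ is correct and pleasant: coloring vertex $i$ by $2i$ and edge $\{i,i+d\}$ by $2i+d$ does give a proper $(2k+2)$-total coloring of $C_{m(k+1)}^k$, and the check you sketch goes through. But from that point on the proposal is a plan, not a proof. You explicitly write that ``carrying this out is the main obstacle'' and then never carry it out: you assert that the recoloring of the window $W$ closes ``for parity reasons'' without producing the recoloring, and the parity heuristic you give (the defect over $m$ blocks is even when $m$ is even) is not turned into an actual assignment of colors on the $2k$ vertices and $\binom{2k}{2}$-ish edges of $W$ that matches the frozen boundary. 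The fallback is in the same state: the inductive splice is described in one sentence, the recoloring of the ``bounded neighborhood'' is again left undone, and the base case $m=2$ is only identified as $\overline{C_{2k+3}}$, with no argument that this graph has a $(2k+2)$-total coloring. Recognizing $C_{2k+3}^k$ as a cycle complement is correct but does not by itself yield TCC; you still have to exhibit the coloring (or cite a result that covers it), and that is precisely the content of the theorem in the hardest case.

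The paper takes a different and more concrete route: it builds the total color matrix block by block from a commutative idempotent pseudo-Latin square of order $k+1$ (obtained from the order-$(k+2)$ idempotent Latin square by deleting the last row and column), places these squares along the diagonal, and fills the off-diagonal ``connecting'' bands between consecutive blocks with constant super-diagonals using the new colors $k+2,\ldots,2k+2$. The extra vertex created by the ``$+1$'' is absorbed by the last row and column, which are completed using the deleted row/column of the order-$(k+2)$ square. The case $m=2$ is done explicitly (see the worked example $C_{13}^5$), and even $m>2$ is obtained by concatenating copies of the $m=2$ pattern with the connecting bands alternating between the two orientations. If you want to salvage your arithmetic approach, the missing ingredient is exactly an explicit recoloring of the seam for $m=2$ compatible with your periodic scheme; once you have that, your two-step induction becomes genuine, and your argument would then be a cleaner alternative to the paper's Latin-square bookkeeping.
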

	\begin{proof}
		We show how we can structure the total color matrix as follows. 
		We can say that the total color matrix can be divided into seven parts, of which the first part consists of the super-diagonal starting from the edge $(0,(n-k))$ and ending at the super-diagonal or entry corresponding to edge $(0,(n-1))$. The second part is formed of several subparts that consist of the diagonal region along with $k$ super diagonals. The third part corresponds to entries in the last column from the entry corresponding to the edge $((n-k-1),(n-1))$. The fourth, fifth, and sixth parts are symmetric counterparts of the first, second, and third parts. For coloring the first and second parts (and hence their counterparts fourth and fifth parts), several interspersing Tableau are used, which are described in the following procedure would be followed. \\

		We know that there exists a commutative idempotent Latin square of order $k+2$ that we denote by  $C'$. We construct three  Tableaux $A, B$, and $C$ of orders $k, k$, and $k+1$, respectively. The Tableau $A$ is upper triangular whose entries in the main diagonal are $2k+2$, and the subsequent sub-diagonals are filled respectively with $2k, 2k-1$ down to $k+3$. Similarly, Tableau $B$ is lower triangular with main diagonal entries being $k+3$, and subsequent sub-diagonals increase values to $2k+2$. Tableau $C$ is the pseudo-Latin square of order $k+1$ derived from the commutative idempotent Latin square of order $k+2$.
		The Tabelau $A$, $B$, $C$ are shown respectively in Tables \ref{table:1}, \ref{table:2}.
		
		\begin{table}[!ht]
			\begin{adjustbox}{width=\textwidth}
				\begin{tabular}{|c|c|c|c|c|}\hline
					$k+3$&&&&\\
					\hline$k+4$&$k+3$&&&\\
					\hline$k+5$&$k+4$&$k+3$&&\\
					\hline\ldots&\ldots&\ldots&$k+3$&\\
					\hline$2k+2$&\ldots&\ldots&\ldots&$k+3$\\
					\hline
				\end{tabular}
				\quad
				\begin{tabular}{|c|c|c|c|c|}\hline
					$2k+2$&$2k+1$&$2k$&$\ldots$&$k+3$\\
					\hline &$2k+2$&$2k+1$&$\ldots$&$k+3$\\
					\hline &&$2k+2$&$2k+1$&$\ldots$\\
					\hline & & & $2k+2$ & $2k+1$\\
					\hline & & & & $2k+2$\\
					\hline
				\end{tabular}
		\end{adjustbox}
		\caption{Left-Tableau $B$ and Right-Tableau $A$.}
		\label{table:1}
	\end{table}
	
	\begin{table}[!ht]
		\begin{adjustbox}{width=\textwidth}
			\begin{tabular}{|c|c|c|c|c|c|c|}\hline
				1&$\frac{k+3}{2}+1$&$2$&$\ldots$&$\ldots$&$k+2$&$\frac{k+3}{2}$\\\hline
				$\frac{k+3}{2}+1$&$2$&$\frac{k+3}{2}+2$&$\ldots$&$\ldots$&$\frac{k+3}{2}$&1\\\hline
				$\ldots$&&&$\ldots$&&$\ldots$\\\hline
				$\frac{k+3}{2}$&1&$\frac{k+3}{2}+1$&$\ldots$&$\ldots$&$\frac{k+3}{2}-1$&$k+2$\\\hline
			\end{tabular}
			\quad
			\begin{tabular}{|c|c|c|c|c|c|}\hline
				1&$\frac{k+3}{2}+1$&2&$\ldots$&$\ldots$&$k+2$\\\hline
				$\frac{k+3}{2}+1$&2&$\frac{k+3}{2}+2$&$\ldots$&$\ldots$&$\frac{k+3}{2}$\\\hline
				$\ldots$&&&$\ldots$&$\ldots$\\\hline $k+2$&$\frac{k+3}{2}$&1&$\ldots$&$\ldots$&$k+1$\\\hline
			\end{tabular}
		\end{adjustbox}
		\caption{Left-Tableau $C'$ and its derivative Right-Tableau $C$.}
		\label{table:2}
	\end{table}

	The three Tableaux are arranged as given in Table \ref{table:3} to form the $n \times n$ total color matrix.
	
	We place $m$ copies of Tableau $C$ (of order $k + 1$) along the main diagonal as depicted in Table \ref{table:4}, where $n=m(k+1)+1$. Each cell represents a sub-matrix of order $k+1$. In the cell $(i,i+1)$, $0<i<m$, one copy of $B$ ($i$ odd) or $A^T$ ($i$ even) is placed bottom-left justified. Similarly, in
	the cell $(i+1,i)$ one copy of $B^T$ ($i$ odd) or $A$ ($i$ even) is placed top-right justified. One copy each of $A$ and $A^T$ are respectively placed at cells $(1,m)$ and $(m,1)$.
	
	In the arrangement shown, the rows of Tableau $B$ start from the second row and $(k+2)$-th column. This ensures that the Tableaux $B$ and  $A$ are differently placed so that none of the numbers in the color matrix coincide. Similarly, the last row of the last $B^T$ is at the $(k+2)$-th row. This ensures that the colors in the last $B^T$  and $A^T$ do not coincide.
	From the above observations, the entries in rows (columns) of Tableau $A$ placed at cell $(1,m)$ and Tableau $B$ at $(1,2)$ (at $(m-1, m))$, respectively do not coincide. By symmetry, the
	colors in $B^T$ at $(2,1)$ (at $(m,m-1)$) and $A^T$ at $(m,1)$ also do not coincide.
	
	\begin{table}[ht!]\centering
		\begin{tabular}{|c|c|c|c|c|c|}\hline
			$C$&\diagbox{$B$}{}&&&&\diagbox{}{$A$}\\\hline
			\diagbox{}{$B^T$\\}&$C$&\diagbox{$A^T$\\}{}&&&\\\hline
			&\diagbox{}{$A$}&$C$&\diagbox{$B$}{}&&\\\hline
			&&\diagbox{}{$B^T$\\}&C&\diagbox{$A^T$\\}{}&\\\hline
			&&&\diagbox{}{$A$}&$C$&\diagbox{$B$}{}\\\hline
			\diagbox{$A^T$\\}{}&&&&\diagbox{}{$B^T$\\}&$C'$\\\hline\end{tabular}
		\caption{Color matrix structure.}
		\label{table:3}
	\end{table}
	
	In addition to the above arrangement, the third part of the total color matrix (and hence the sixth part) is given the colors that are just continuation of the entries of the Tableau $C$ as seen in the commutative idempotent Latin square of order $k+2$ of which the Tableau $C$ is derived. In the above Table \ref{table:3}, the third part and sixth parts are included in Tableau $C'$. As the entries of $C'$ are entirely different from that of $A$, there will be no clashes in this case.
	
	The procedures above give us a total coloring of $G$ because there are no clashes between the entries in the matrix. Thus, the graph $G$ satisfies TCC. 
\end{proof}
\begin{exmp}
	Consider the power of cycle $C_{13}^5$. Here, we have $m=2$ and $k+1=6$. The total color matrix, in this case, is given in Table \ref{table:4}:
	\begin{table}[htbp]
		\begin{tabular}{|c|c|c|c|c|c|c|c|c|c|c|c|c|c|}
			\hline
			&0&1&2&3&4&5&6&7&8&9&10&11&12\\\hline
			0&1&5&2&6&3&7&&&12&11&10&9&8\\\hline
			1&5&2&6&3&7&4&8&&&12&11&10&9\\\hline
			2&2&6&3&7&4&1&9&8&&&12&11&10\\\hline
			3&6&3&7&4&1&5&10&9&8&&&12&11\\\hline
			4&3&7&4&1&5&2&11&10&9&8&&&12\\\hline
			5&7&4&1&5&2&6&12&11&10&9&8&&\\\hline
			6&&8&9&10&11&12&1&5&2&6&3&7&\\\hline
			7&&&8&9&10&11&5&2&6&3&7&4&1\\\hline
			8&12&&&8&9&10&2&6&3&7&4&1&5\\\hline
			9&11&12&&&8&9&6&3&7&4&1&5&2\\\hline
			10&10&11&12&&&8&3&7&4&1&5&2&6\\\hline
			11&9&10&11&12&&&7&4&1&5&2&6&3\\\hline
			12&8&9&10&11&12&&&1&5&2&6&3&7\\\hline
			
		\end{tabular}
		\caption{Total Color matrix of $C_{13}^5$}
		\label{table:4}
	\end{table}
\end{exmp}
\begin{exmp}
	Consider the graph $C_{25}^5$. Here, $m=4$. The total color matrix, in this case, is given in Table \ref{table:5}
	\begin{table}[htbp]
		\begin{adjustbox}{width=\textwidth}
			\begin{tabular}{|c|c|c|c|c|c|c|c|c|c|c|c|c|c|c|c|c|c|c|c|c|c|c|c|c|c|}
				\hline
				&0&1&2&3&4&5&6&7&8&9&10&11&12&13&14&15&16&17&18&19&20&21&22&23&24\\\hline
				0&1&5&2&6&3&7&&&&&&&&&&&&&&&12&11&10&9&8\\\hline
				1&5&2&6&3&7&4&8&&&&&&&&&&&&&&&12&11&10&9\\\hline
				2&2&6&3&7&4&1&9&8&&&&&&&&&&&&&&&12&11&10\\\hline
				3&6&3&7&4&1&5&10&9&8&&&&&&&&&&&&&&&12&11\\\hline
				4&3&7&4&1&5&2&11&10&9&8&&&&&&&&&&&&&&&12\\\hline
				5&7&4&1&5&2&6&12&11&10&9&8&&&&&&&&&&&&&&\\\hline
				6&&8&9&10&11&12&1&5&2&6&3&7&&&&&&&&&&&&&\\\hline
				7&&&8&9&10&11&5&2&6&3&7&4&12&&&&&&&&&&&&\\\hline
				8&&&&8&9&10&2&6&3&7&4&1&11&12&&&&&&&&&&&\\\hline
				9&&&&&8&9&6&3&7&4&1&5&10&11&12&&&&&&&&&&\\\hline
				10&&&&&&8&3&7&4&1&5&2&9&10&11&12&&&&&&&&&\\\hline
				11&&&&&&&7&4&1&5&2&6&8&9&10&11&12&&&&&&&&\\\hline
				12&&&&&&&&12&11&10&9&8&1&5&2&6&3&7&&&&&&&\\\hline
				13&&&&&&&&&12&11&10&9&5&2&6&3&7&4&8&&&&&&\\\hline
				14&&&&&&&&&&12&11&10&2&6&3&7&4&1&9&8&&&&&\\\hline
				15&&&&&&&&&&&12&11&6&3&7&4&1&5&10&9&8&&&&\\\hline
				16&&&&&&&&&&&&12&3&7&4&1&5&2&11&10&9&8&&&\\\hline
				17&&&&&&&&&&&&&7&4&1&5&2&6&12&11&10&9&8&&\\\hline
				18&&&&&&&&&&&&&&8&9&10&11&12&1&5&2&6&3&7&\\\hline
				19&&&&&&&&&&&&&&&8&9&10&11&5&2&6&3&7&4&1\\\hline
				20&12&&&&&&&&&&&&&&&8&9&10&2&6&3&7&4&1&5\\\hline
				21&11&12&&&&&&&&&&&&&&&8&9&6&3&7&4&1&5&2\\\hline
				22&10&11&12&&&&&&&&&&&&&&&8&3&7&4&1&5&2&6\\\hline
				23&9&10&11&12&&&&&&&&&&&&&&&7&4&1&5&2&6&3\\\hline
				24&8&9&10&11&12&&&&&&&&&&&&&&&1&5&2&6&3&7\\\hline
			\end{tabular}
		\end{adjustbox}
		\caption{Total Color matrix of $C_{25}^5$}
		\label{table:5}
	\end{table}
	
\end{exmp}

\section{Results on Cayley graphs on Gyrogroups}
First, we will begin with a lemma that helps determine the isomorphism of Cayley graphs defined on the same group. An exponent distribution of a subset $\Sigma$ of a generating set $S$ of a group is the set of exponents of the elements of $S$  with respect to the product of elements of $\Sigma$. That is, if $\Sigma=\{\sigma_1,\sigma_2,\ldots,\sigma_n\}$ and $S=\{s_1,s_2,\ldots,s_n\}$; then the exponent distribution of $\Sigma$ is the set $\{i_{11},i_{12},\ldots,i_{1n},i_{21},i_{22},\ldots,i_{2n},\ldots,i_{n1},i_{n2},\ldots,i_{nn}\}$, where $s_1=(\sigma_1^{i_{11}}\sigma_2^{i_{12}}\ldots\sigma_n^{i_{1n}}), s_2=(\sigma_1^{i_{21}}\sigma_2^{i_{22}}\ldots\sigma_n^{i_{2n}}),\ldots,s_n=(\sigma_1^{i_{n1}}\sigma_2^{i_{n2}}\ldots\sigma_n^{i_{nn}})$. The following lemma gives an algorithm to determine when two Cayley Graphs on the same group with different generating sets are isomorphic. 
\begin{lem}
Two Cayley graphs $G_1=C(\Gamma, S_1)$ and $G_2=C(\Gamma, S_2)$ are isomorphic if there are two generating subsets $\Sigma_1\subset S_1$ and $\Sigma_2\subset S_2$ of $G$ of same cardinality and their exponent distributions with respect to some permutation of the generating set are the same.
\end{lem}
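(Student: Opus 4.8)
The plan is to manufacture, from the matched exponent distributions, a group automorphism $\phi$ of $\Gamma$ with $\phi(S_1)=S_2$, and then invoke the standard fact that any such automorphism is a graph isomorphism $C(\Gamma,S_1)\to C(\Gamma,S_2)$. First I would fix notation: write $\Sigma_1=\{\sigma_1,\dots,\sigma_n\}$ with product $\pi_1:=\sigma_1\sigma_2\cdots\sigma_n$, and $\Sigma_2=\{\tau_1,\dots,\tau_n\}$ with product $\pi_2:=\tau_1\tau_2\cdots\tau_n$, and order the two generating sets by the permutation furnished by the hypothesis so that their common exponent distribution $\{i_1,\dots,i_n\}$ realizes $S_1=\{\pi_1^{i_1},\dots,\pi_1^{i_n}\}$ and $S_2=\{\pi_2^{i_1},\dots,\pi_2^{i_n}\}$ with $\pi_1^{i_j}$ matched to $\pi_2^{i_j}$. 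Since every element of $S_1$ is a power of $\pi_1$ and $S_1$ generates $\Gamma$, we get $\Gamma=\langle\pi_1\rangle$; symmetrically $\Gamma=\langle\pi_2\rangle$. Hence $\Gamma$ is cyclic of order $N:=|\Gamma|$, and $\pi_1,\pi_2$ are generators, so $\mathrm{ord}(\pi_1)=\mathrm{ord}(\pi_2)=N$.

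Next I would define $\phi:\Gamma\to\Gamma$ by $\phi(\pi_1^{\,k})=\pi_2^{\,k}$ for all $k\in\mathbb{Z}$. The only step with any content is checking that $\phi$ is well defined and bijective: this holds because $\pi_1^{\,k}=\pi_1^{\,k'}$ iff $N\mid(k-k')$ iff $\pi_2^{\,k}=\pi_2^{\,k'}$, using $\mathrm{ord}(\pi_1)=\mathrm{ord}(\pi_2)=N$. That $\phi$ is a homomorphism is immediate from $\phi(\pi_1^{\,k}\pi_1^{\,l})=\pi_2^{\,k+l}=\phi(\pi_1^{\,k})\phi(\pi_1^{\,l})$, so $\phi\in\mathrm{Aut}(\Gamma)$. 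By construction $\phi(\pi_1^{\,i_j})=\pi_2^{\,i_j}$, so the equality of the exponent distributions under the chosen permutation gives $\phi(S_1)=S_2$ as sets (note this is automatically compatible with the symmetry of the generating sets, since $\phi(s^{-1})=\phi(s)^{-1}$, and $\phi$ fixes the identity).

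Finally I would conclude with the routine verification that a bijection of $\Gamma$ that is a group automorphism sending $S_1$ to $S_2$ is a Cayley graph isomorphism: for $x,y\in\Gamma$, the vertices $x,y$ are adjacent in $G_1$ iff $x^{-1}y\in S_1$ iff $\phi(x)^{-1}\phi(y)=\phi(x^{-1}y)\in\phi(S_1)=S_2$ iff $\phi(x),\phi(y)$ are adjacent in $G_2$; since $\phi$ is a bijection on the vertex set, $G_1\cong G_2$.

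I expect the main obstacle to be not the computation but the interpretation of the statement: the very definition of "exponent distribution" already presupposes that every generator lies in the cyclic subgroup generated by the product of $\Sigma$, so the hypothesis tacitly restricts to cyclic $\Gamma$ (and, in the gyrogroup applications that follow, to generating sets contained in a single cyclic subgyrogroup), and the argument above must be read with that caveat. For the gyrogroup version one needs the extra remark that the power map $\pi_1^{\,k}\mapsto\pi_2^{\,k}$ between two cyclic subgyrogroups of equal order is a gyrogroup isomorphism — which is fine, since a cyclic subgyrogroup is in fact a group — after which the adjacency equivalence goes through verbatim.
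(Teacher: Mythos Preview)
Your proof is correct and is essentially a rigorous fleshing-out of the paper's one-sentence argument, which simply says that one can ``construct identical graphs by starting from the identity element of $\Gamma$ and the generating subsets $\Sigma_1$ and $\Sigma_2$''; your explicit automorphism $\pi_1^{\,k}\mapsto\pi_2^{\,k}$ is exactly the map implicit in that remark. Your further observation that the hypothesis already forces $\Gamma$ to be cyclic is a point the paper does not make (indeed the ensuing corollary treats the cyclic case as if it were a genuine specialization), so in that respect your write-up is sharper than the original.
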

\begin{proof}
The proof is immediate on noting that one could construct identical graphs by starting from the identity element of $\Gamma$ and the identical products of the elements of generating subsets $\Sigma_1$ or $\Sigma_2$. 
\end{proof}
The above algorithm could be in polynomial time if the base group $\Gamma$ is cyclic, as the following corollary shows.
\begin{cor}
Let $\Gamma$ be a cyclic group of order $n$. If there exist generating elements of $s_1\in S_1$ and $s_2\in S_2$ such that their exponent distributions are same then the graphs $G_1=C(\Gamma,S_1)$ and $G_2=C(\Gamma,S_2)$ are isomorphic. 
\end{cor}
\begin{proof}
The proof is immediate from the previous lemma once it is known that the cyclic groups have a single minimal generator, and the corollary assumes that $s_1,s_2$ are those generators.
\end{proof}
\begin{thm} Let $\Gamma$ be the $2$-gyrogroup of order $n=2^l$ presented in \cite{MAU}. Then, the graph $C(\Gamma, S)$ with $S=\{1,2,\ldots,k,m-k,m-k+1,\ldots,m-1,\frac{m}{2}+m \}$. Then $G$ satisfies TCC. Further, if the power of cycle $C_m^k$ is type I, then $G$ is also type I.
\end{thm}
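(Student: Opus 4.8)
The plan is to exploit the dihedral-like structure of $\Gamma$ recorded in \cite{MAU}: with this generating set the Cayley graph $G = C(\Gamma,S)$ splits into a ``rotation'' part $A$ and a ``reflection'' part $B$, each of which — using only the generators $1,\dots,k$ together with their inverses $m-k,\dots,m-1$ — induces a copy of $C_m^k$, while the single involutive generator $\tfrac{m}{2}+m$ contributes a perfect matching $M$ between $A$ and $B$ pairing the vertex of $A$ indexed by $i$ with the vertex of $B$ indexed by $i+\tfrac{m}{2}$ (the gyroautomorphism twists only relabel the two parts by Cayley-graph automorphisms, so this description survives). Since $S$ has $2k+1$ elements, $G$ is $(2k+1)$-regular, $\Delta(G)=2k+1$, and the only $A$--$B$ edges are those of $M$. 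I would open the proof by stating this decomposition with a reference to \cite{MAU}, and set aside the harmless degenerate cases ($k$ small, or $2k\ge m$, where some generators coincide).

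The core is then a short gluing construction. Fix a total coloring $\phi$ of $C_m^k$ with $C$ colors, where $C=\chi''(C_m^k)$; by the results on powers of cycles (Section 3 and the references therein) one may take $C\le 2k+2$, and $C=2k+1$ precisely when $C_m^k$ is type I. Color the copy $A$ by $\phi$. Color the copy $B$ by the rotated coloring $\phi'$ given by $\phi'(b_j)=\phi(j-\tfrac{m}{2}+1)$ on vertices and $\phi'(\{b_j,b_{j'}\})=\phi(\{j-\tfrac{m}{2}+1,\,j'-\tfrac{m}{2}+1\})$ on edges; this is again a legitimate total coloring, since translation by $1-\tfrac{m}{2}$ is an automorphism of $C_m^k$. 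Finally give every edge of $M$ one new color $C+1$.

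It remains to verify that this is a proper total coloring of $G$, and every potential clash is immediate except one. Within $A$ and within $B$ the coloring is proper by construction; the edges of $M$ form a matching, so no two of them are incident, and they carry a color ($C+1$) used nowhere else, so an $M$-edge clashes with neither an adjacent edge nor an incident vertex. The one substantive point is the vertex--vertex condition along a matching edge $a_i b_{i+m/2}$: its endpoints receive colors $\phi(i)$ and $\phi'(b_{i+m/2})=\phi(i+1)$, which differ because $i$ and $i+1$ are adjacent in $C_m^k$ (as $k\ge1$) and $\phi$ restricted to the vertices is a proper coloring. This is exactly what the shift by $\tfrac{m}{2}-1$ is designed to do: it converts the index-offset $\tfrac{m}{2}$ seen across a matching edge into the offset $1$. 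Hence $\chi''(G)\le C+1$; with $C\le 2k+2$ this gives $\chi''(G)\le 2k+3=\Delta(G)+2$, so $G$ satisfies TCC, and with $C=2k+1$ (when $C_m^k$ is type I) it gives $\chi''(G)\le 2k+2=\Delta(G)+1$, so $G$ is type I.

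The main obstacle is not the coloring but pinning down the structural claim of the first paragraph: one must confirm that, under the operation of \cite{MAU}, the two cosets each genuinely induce $C_m^k$ (the gyro-twist could a priori distort them) and that the matching from $\tfrac{m}{2}+m$ pairs indices by a translation rather than some less tractable permutation. If the pairing is instead $a_i\leftrightarrow b_{\sigma(i)}$ for another Cayley-graph automorphism $\sigma$, the same device works with $\phi'=\phi\circ\rho\circ\sigma^{-1}$ in place of the pure rotation $\rho$, but the bookkeeping should be done carefully against the explicit presentation in \cite{MAU}. A secondary check is that $C_m^k$ for the relevant $m$ falls under a known TCC (respectively type I) case, so that the hypothesis is non-vacuous and the unconditional TCC assertion is justified.
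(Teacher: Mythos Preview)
Your approach is essentially the paper's: decompose $G$ into the two halves $T_1=\{0,\dots,m-1\}$ and $T_2=\{m,\dots,n-1\}$, each inducing a copy of $C_m^k$, together with the perfect matching coming from the involution $\tfrac{m}{2}+m$; totally color each half using Campos--de~Mello's TCC result for powers of cycles (Theorem~16 of \cite{CAMP}); and spend one fresh color on the matching. Where you defer the structural verification to \cite{MAU}, the paper carries it out from the explicit $\oplus$-table: for $i\in T_2,\ s\in T_1$ one has $i\oplus s=\bigl(i+(\tfrac{m}{2}-1)s\bigr)+m$, and since $\tfrac{m}{2}-1$ is a unit in $\mathbb{Z}_m$ the exponent-distribution lemma (Corollary~4.1) identifies the $T_2$-subgraph with $C_m^k$. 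That multiplication-by-$(\tfrac{m}{2}-1)$ automorphism is exactly the extra $\sigma$ you anticipated in your fallback $\phi'=\phi\circ\rho\circ\sigma^{-1}$, so your caution there is well placed: in the natural indexing the $T_2$-half is the isomorphic circulant on connection set $(\tfrac{m}{2}-1)\cdot S_1$, not literally $C_m^k$. Your rotation-by-one device for the vertex--vertex check across $M$ is more explicit than the paper's corresponding step, which argues somewhat loosely that the independent-set partition of $T_1$ transports to $T_2$ along the matching.
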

\begin{proof}
For ease of reference, we state the binary operation of the $2$-gyrogroup presented in \cite{MAH} and Example $1$ of \cite{MAU}. The binary operation $\oplus$ is defined as:\\
$$i\oplus j=\begin{cases}i+j\pmod m\qquad\qquad\qquad\qquad\qquad\qquad, i\in T_1; j\in T_1\\\{i+j\pmod m\}+m\quad\qquad\qquad\qquad\qquad, i\in T_1 ;j\in T_2\\\{i+(\frac{m}{2}-1)j\}\pmod m)+m\quad\qquad\qquad, i\in T_2; j\in T_1\\\{(\frac{m}{2}-1)i+(\frac{m}{2}+1)j\pmod m\}+m\qquad, i\in T_2 ;j\in T_2 . \end{cases}$$
Here $T_1=\{0,1,\ldots,m-1\}$, $m=\frac{n}{2}$ and $T_2=\{m,m+1\ldots,n-1\}$.

We note that the subgraph induced by the set $T_1$ is the power of cycle $C_m^k$, because, we have $i\oplus j=i+j$ for $i,j\in\{0,1,\ldots,m-1\}$.

Similarly, the subgraph induced by the set $T_2$ with respect to the generating set $\{(\frac{m}{2}-1)+m,2m-2,\ldots,k(\frac{m}{2}-1)+m,2m-k(\frac{m}{2}-1),\ldots,2+m,(\frac{m}{2}+1)+m\}$ forms the power of cycle $C_m^k$. 

This is because the exponent distribution of $1$ for $\{1,2,\ldots,k,n-k,\ldots,n-2,n-1\}$ and $z=(\frac{m}{2}-1)$ for $\{(\frac{m}{2}-1),m-2,\ldots,k(\frac{m}{2}-1)m-k(\frac{m}{2}-1),\ldots,2,(\frac{m}{2}+1)\}$ are the same, in the group $\mathbb{Z}_m$. Therefore, we can use Corollary 5.2.2  to conclude the isomorphism.

Now, note that the element $\frac{m}{2}+m$ acts as a sort of reflection equivalent for the gyrogroup $\Gamma$, in the sense that, we have $(\frac{m}{2}+m)^{\oplus2}=(\frac{m}{2}+m)\oplus(\frac{m}{2}+m)=(\frac{m}{2}+1)(\frac{m}{2}+m)+(\frac{m}{2}-1)(\frac{m}{2}+m)\pmod m$ $\equiv2\frac{m^2}{4}=m(\frac{m}{2})=0\pmod m$. Therefore, this element induces a perfect matching in the graph $C(\Gamma, S)$ with the end vertices of the edges being in the two sets $T_1$ and $T_2$, respectively; because we have $i\oplus j=i+(\frac{m}{2}+1)j\pmod m$ for $i\in T_1$ $j\in T_2$.

Now, if $i$ and $j$ and in the same independent set of the subgraph induced by $T_1$, then $i\oplus (\frac{m}{2}+m)=(i+(\frac{m}{2}+m)\pmod m)+m$ and $j\oplus (\frac{m}{2}+m)=(i+(\frac{m}{2}+m)\pmod m)+m$ are also in the same independent set, as $i-j=i\ominus j$. Hence, we can divide the vertices of the induced subgraph on $T_2$ into the same number of independent sets as that of the induced subgraph on $T_1$ by shifting the translates of an independent set with respect to the reflection element $\frac{m}{2}+m$. Thus, the chromatic number of $G$ is the same as that of $C_m^k$. As the graphs $C_m^k$ satisfy TCC (by Theorem 16 of Campos-de Mello \cite{CAMP}), the induced subgraphs on $T_1$ and $T_2$ individually satisfy TCC. Since we have arranged the induced graphs together in the same number of independent sets, only the edge coloring of the connecting perfect matching between the induced subgraphs formed by the element $\frac{m}{2}+m$ needs to be done in order to complete the total Coloring of $G$. We give one extra color for this, making $G$ satisfy TCC. This argument also shows why $G$ will be of type I if $C_m^k$ is also of type I.
\end{proof}
The above theorem at once gives the following generalization as its corollary.
\begin{cor}
Let $\Gamma$ be the $2$-gyrogroup presented in \cite{MAU}. Then, if the graph $C(\mathbb{Z}_m, S_1)$ staisfies TCC, then $G=C(\Gamma,S)$ with $S_1\cup\{\frac{m}{2}+m\}$ satisfies TCC. Further, if the graph $C(\mathbb{Z}_m, S_1)$ is type I, then $G$ is also type I.
\end{cor}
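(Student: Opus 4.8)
The plan is to mirror the proof of the preceding theorem almost verbatim, since the corollary only abstracts away the specific structure of the generating set on $\mathbb{Z}_m$. First I would set up notation exactly as in the theorem: write $\Gamma$ with its two halves $T_1 = \{0,1,\ldots,m-1\}$ and $T_2=\{m,m+1,\ldots,n-1\}$, and recall the binary operation $\oplus$ from \cite{MAH,MAU}. The key structural observation I would reuse is that the subgraph of $C(\Gamma,S)$ induced by $T_1$ is isomorphic to $C(\mathbb{Z}_m,S_1)$ directly, because $\oplus$ restricted to $T_1\times T_1$ is ordinary addition modulo $m$. For the subgraph induced by $T_2$, I would invoke Lemma 4.1 (the exponent-distribution criterion) or its Corollary 4.1 for cyclic groups: although the action of a generator $s\in S_1\cap T_1$ on $T_2$ is twisted by the factor $\frac{m}{2}-1$, one can exhibit a generating set of $\mathbb{Z}_m$ with the same exponent distribution as $S_1$, so the induced subgraph on $T_2$ is again isomorphic to $C(\mathbb{Z}_m,S_1)$.

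Next I would handle the connecting element $\tfrac{m}{2}+m$. As computed in the theorem's proof, $(\tfrac{m}{2}+m)^{\oplus 2}=0$, so this element is an involution that induces a perfect matching between $T_1$ and $T_2$; this part of the argument is independent of $S_1$ and carries over unchanged. The crucial compatibility point — and the one step I expect to be the main obstacle — is showing that the chromatic number of $G$ equals that of $C(\mathbb{Z}_m,S_1)$, i.e.\ that a proper coloring of the $T_1$-copy can be transported along the $\tfrac{m}{2}+m$-matching to a proper coloring of the $T_2$-copy so that no matched pair shares a color. In the theorem this worked because $i\ominus j = i-j$ inside $T_1$, so translation by $\tfrac{m}{2}+m$ sends an independent set in $T_1$ to an independent set in $T_2$; I would need to verify that this ``independent-set-preserving'' property of the reflection still holds for an arbitrary connection set $S_1$, not just the specific powers-of-cycle one. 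This should follow from the same identity $i\ominus j = i - j$ for $i,j\in T_1$ together with the fact that the adjacency in the $T_2$-copy is governed by the twisted-but-isomorphic connection set, but it deserves an explicit line.

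Once the two copies are colored with $\chi(C(\mathbb{Z}_m,S_1))$ colors and arranged in the same family of independent sets, I would combine total colorings: by hypothesis $C(\mathbb{Z}_m,S_1)$ satisfies TCC, hence both induced subgraphs admit total colorings with $\Delta(C(\mathbb{Z}_m,S_1))+2$ colors, and since $\Delta(G)=\Delta(C(\mathbb{Z}_m,S_1))+1$ (the extra degree coming from the matching), I use one fresh color for the connecting matching to obtain a total coloring of $G$ with $\Delta(G)+2$ colors. For the type-I refinement, I would note that if $C(\mathbb{Z}_m,S_1)$ is type I then both copies use only $\Delta+1$ colors, and the matching edges — whose endpoints lie in opposite halves — can be absorbed into the existing color classes by the same equitability/alternating argument used in the theorem, so $G$ is type I as well. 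Finally I would remark that Theorem 4.2 is the special case $S_1 = \{1,2,\ldots,k,m-k,\ldots,m-1\}$, which makes this a genuine generalization.
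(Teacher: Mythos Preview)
Your plan matches the paper's proof almost exactly: the paper simply says to replace the element $1$ in the preceding theorem by any generator $s\in S_1$ of $\mathbb{Z}_m$, observe that the subgraphs on $T_1$ and $T_2$ are isomorphic circulants, note that $\tfrac{m}{2}+m$ induces a perfect matching between them, transport the coloring as in the theorem, and give that matching one fresh color. Your write-up is in fact more explicit than the paper's about invoking the exponent-distribution corollary and about checking that the reflection sends independent sets to independent sets.

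The one place you go astray is the type-I refinement. There is no ``equitability/alternating absorption'' argument in the theorem's proof, and none is needed here: if $C(\mathbb{Z}_m,S_1)$ is $d$-regular and type~I, each half is totally colored with $d+1$ colors; adding \emph{one fresh} color for the matching gives $d+2=\Delta(G)+1$ colors, so $G$ is already type~I by plain counting. Trying to absorb the matching into the existing $d+1$ classes would aim for $\Delta(G)$ total colors, which is impossible. Drop the absorption step and just reuse the ``one extra color'' move from the TCC case; the arithmetic does the rest.
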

\begin{proof}
The proof is immediate once we replace the element $1$ in the previous theorem with any suitable generator $s\in S_1$ of the group $\mathbb{Z}_m$. Then, the subgraphs induced by the two sets $T_1$ and $T_2$ are isomorphic circulant graphs. The element $\frac{m}{2}+m$ then gives us a perfect matching of elements in $G$ with the end vertices in $T_1$ and $T_2$, respectively. Again, by following the similar argument in the proof of the last theorem, the chromatic number of $G$ is the same as that of the graph $C(\mathbb{Z}_m, S_1)$. Then, giving the perfect matching induced by the element $\frac{m}{2}+m$ makes $G$ satisfy TCC. In a similar vein, if $C(\mathbb{Z}_m, S_1)$ is type I, $G$ is also type I.
\end{proof}
The following result, an immediate consequence of the previous theorem, gives us the chromatic index of such graphs.
\begin{thm}
Let $\Gamma$ be the $2$-gyrogroup presented in \cite{MAU}. Then, if the graph $C(\mathbb{Z}_m, S_1)$ satisfies TCC, then $G=C(\Gamma,S)$ with $S_1\cup\{\frac{m}{2}+m\}$ has chromatic number equal to that of the graph $\chi(C(\mathbb{Z}_m,S_1)).$ In addition, the graph is of class I.
\end{thm}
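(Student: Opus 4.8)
The plan is to reuse, essentially verbatim, the structural decomposition of $G$ produced in the proofs of the preceding theorem and corollary, and to replace the ``one extra color for the connecting matching'' step there by a sharper argument that recycles colors already present. Write $d=|S_1|$ for the common degree of the circulant graph $C(\mathbb{Z}_m,S_1)$, and recall that $G=C(\Gamma,S_1\cup\{\tfrac{m}{2}+m\})$ splits as $G=H_1\cup H_2\cup M$, where the subgraph $H_1$ induced on $T_1$ and the subgraph $H_2$ induced on $T_2$ are each isomorphic to $C(\mathbb{Z}_m,S_1)$, no generator in $S_1$ joins $T_1$ to $T_2$, and the involution $\tfrac{m}{2}+m$ induces a perfect matching $M=\{\,v\,\phi(v):v\in T_1\,\}$ between them; here $\phi\colon v\mapsto v\oplus(\tfrac{m}{2}+m)$ is exactly the map under which independent sets of $H_1$ were carried onto those of $H_2$ in the earlier proof, and in particular $\phi$ is a graph isomorphism $H_1\to H_2$. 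Consequently $G$ is $(d+1)$-regular, so $\Delta(G)=d+1$ and $\chi'(G)\ge d+1$; the equality of chromatic numbers was established in the course of proving the preceding corollary, so it remains only to exhibit a proper edge coloring of $G$ with $d+1$ colors.

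First I would fix a proper edge coloring $c_1$ of $H_1\cong C(\mathbb{Z}_m,S_1)$ using colors from $\{1,\dots,d+1\}$, which exists by Vizing's theorem \cite{VIZ} (and a fortiori is guaranteed by the hypothesis that $C(\mathbb{Z}_m,S_1)$ satisfies TCC). Since $H_1$ is $d$-regular, at each vertex $v$ at most $d$ of the $d+1$ colors occur on the edges at $v$, so I may choose a color $f(v)\in\{1,\dots,d+1\}$ that is absent at $v$. Next I would transport $c_1$ to $H_2$ through $\phi$, coloring each edge $\phi(u)\phi(w)$ by $c_1(uw)$; this coloring of $H_2$ is proper, and since $\phi$ maps the edges at $v$ bijectively onto the edges at $\phi(v)$ while preserving colors, $f(v)$ is also absent at $\phi(v)$. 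Finally, color each matching edge $v\,\phi(v)\in M$ by the color $f(v)$.

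What remains is to check properness, which is routine: inside $H_1$ and inside $H_2$ the coloring is proper by construction; the only edges between $T_1$ and $T_2$ are those of $M$, and at each of $v$ and $\phi(v)$ the unique $M$-edge carries $f(v)$, which by choice does not occur on the $H_1$-edges at $v$ nor on the $H_2$-edges at $\phi(v)$; and two distinct $M$-edges share no endpoint and so cannot clash. Hence $G$ has a proper edge coloring with $d+1=\Delta(G)$ colors, so $\chi'(G)=\Delta(G)$ and $G$ is of class I. The one point that genuinely carries the argument --- and the step I expect to be the main obstacle --- is the claim that $M$ pairs each $v$ with $\phi(v)$ for the \emph{same} isomorphism $\phi$ under which the coloring was transported, so that the colors absent at the two ends of a matching edge can be forced to agree; when $C(\mathbb{Z}_m,S_1)$ is itself of class I this is a non-issue (one may give every matching edge the spare color $d+1$), so the only delicate case is the class II one, and there the required compatibility is precisely the ``shifting of translates by $\tfrac{m}{2}+m$'' already used in the proof of the preceding theorem.
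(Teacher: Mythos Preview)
Your argument is correct but takes a different route from the paper's. The paper's proof is essentially a one-liner: it invokes Stong's result (Corollary~2.3.1 of \cite{STO}) that Cayley graphs on abelian groups of even order are class~I, so each of $H_1$ and $H_2$ already admits a proper edge coloring with $d=|S_1|$ colors; one fresh color on the matching $M$ then gives a $(d+1)$-edge-coloring of $G$, and since $\Delta(G)=d+1$ this finishes. In other words, the ``delicate class~II case'' you isolate at the end never actually occurs here, because $m=2^{k-1}$ is even and Stong applies; the easy branch you mention only in passing is the whole of the paper's proof.

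Your main line --- Vizing on $H_1$, transport through $\phi$ to $H_2$, then color each matching edge $v\phi(v)$ with a color missing at $v$ --- is the standard proof that the prism $H\times K_2$ over any graph $H$ is class~I. It is more self-contained (no appeal to \cite{STO}) but hinges on $\phi$ being a genuine graph isomorphism $H_1\to H_2$. You infer this from the earlier ``independent sets to independent sets'' claim; that inference is legitimate once one observes that $H_1$ and $H_2$ are both $d$-regular on $m$ vertices and hence have equally many non-edges, so a bijection carrying non-edges to non-edges must also carry edges to edges --- a step you should spell out. The paper's route sidesteps this issue entirely, since with Stong in hand the colorings of the two copies need not be compatible at all.
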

\begin{proof}
The chromatic number of $G$ can be inferred from the discussion of Corollary 5.2.4. As the circulant graphs $C(\mathbb{Z}_m, S_1)$ are of class I by the Corollary 2.3.1 of \cite{STO}, we need to only give one extra color to the perfect matching induced by the element $\frac{m}{2}+m$. Thus, all the edges of $G$ can be colored in precisely $\Delta(G)+1$ colors, or $G$ is of class I. 
\end{proof}
An immediate generalization of the above result in the context of edge coloring is the following:
\begin{thm}
All Cayley graphs $G=Cay(\Gamma, S)$ for the $2$-gyrogroup described in the theorems above and any generating set $S$ is of class I. 
\end{thm}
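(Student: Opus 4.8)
The plan is to reduce the general statement to the structural decomposition already established in the preceding results. Recall that the underlying set of $\Gamma$ splits as $T_1\cup T_2$, with $T_1=\{0,1,\ldots,m-1\}$ carrying the group structure of $\mathbb{Z}_m$ and the element $\frac{m}{2}+m$ inducing a perfect matching between $T_1$ and $T_2$. The first step is to observe that for an arbitrary symmetric generating set $S$, every generator either lies in $T_1$, lies in $T_2$, or is the reflection-type element $\frac{m}{2}+m$ (or a translate thereof behaving like a reflection). The generators in $T_1$ restrict on the $T_1$-block to a circulant Cayley graph $C(\mathbb{Z}_m, S_1)$ where $S_1=S\cap T_1$, and by the isomorphism argument already used (the exponent-distribution computation with $z=\frac{m}{2}-1$), the generators lying in $T_2$ act on the $T_2$-block as a circulant graph isomorphic to some $C(\mathbb{Z}_m, S_2)$.

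The key steps, in order, are: (1) partition $S$ into the ``$T_1$ part'', the ``$T_2$ part'', and the set $R$ of reflection-type generators; (2) show the induced subgraph on $T_1$ is a circulant graph and hence class I by Corollary 2.3.1 of \cite{STO}, and likewise for the induced subgraph on $T_2$ using the same corollary (after transporting along the isomorphism); (3) color both circulant blocks with the \emph{same} palette of $\Delta_0$ colors, where $\Delta_0$ is the common degree of the two circulant blocks — this is legitimate because the two blocks are vertex-disjoint; (4) handle the reflection-type generators in $R$, each of which contributes a perfect matching between $T_1$ and $T_2$, by giving each such matching its own fresh color. Since $|R|$ fresh colors are added on top of $\Delta_0$, and the total degree of $G$ is $\Delta(G)=\Delta_0+|R|$, the resulting edge coloring uses exactly $\Delta(G)$ colors, so $G$ is class I. One must also check that distinct elements of $R$ really do give edge-disjoint perfect matchings and that no element of $R$ creates an edge inside $T_1$ or inside $T_2$; the computation $(\frac{m}{2}+m)^{\oplus 2}=0$ from the earlier theorem, suitably generalized, is what guarantees this.

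The main obstacle I expect is step (4): verifying that an \emph{arbitrary} generator of $\Gamma$ not in $T_1\cup T_2$ behaves cleanly as a cross-matching, and that the collection $R$ of all such generators yields mutually edge-disjoint matchings so that $|R|$ distinct new colors genuinely suffice. This requires understanding which elements $x$ of $T_2$ satisfy the reflection-like identity, i.e., for which $x$ the map $i\mapsto i\oplus x$ is an involution swapping $T_1$ and $T_2$; one needs to run through the four cases of the binary operation $\oplus$ and see that such $x$ always pair $T_1$ with $T_2$ and that the matchings for different $x$ share no edge. A secondary subtlety is that if $S$ happens to contain two ``reflection'' generators whose composition lands back inside $T_1$, then that composite should have already been counted among the $T_1$-generators; so one must argue that the decomposition $S = S_1 \sqcup S_2 \sqcup R$ is consistent with the degree bookkeeping $\Delta(G)=\Delta_0+|R|$. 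Once this accounting is pinned down, the class I conclusion is immediate from the class I property of circulant graphs plus the observation that adding a perfect matching in a new color to a class I graph on an even number of vertices keeps it class I.
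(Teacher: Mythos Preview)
Your high-level strategy---decompose the edge set into two circulant blocks plus a collection of cross perfect matchings, invoke Stong's result on the blocks, and spend one fresh color per matching---is exactly the paper's approach. The paper's proof is shorter because it collapses your three-way partition into a two-way one via the single computation that \emph{every} element $s=j+m\in T_2$ satisfies $s\oplus s=0$; hence $R=S\cap T_2$ and your middle category ``$S_2$'' is empty (there are no generators outside $T_1\cup T_2=\Gamma$). In particular, your claim that ``the generators lying in $T_2$ act on the $T_2$-block as a circulant graph'' is the one point that is actually wrong: the circulant structure on the $T_2$-block is produced by the $T_1$-generators through the rule $i\oplus j=(i+(\frac{m}{2}-1)j\pmod m)+m$ for $i\in T_2$, $j\in T_1$, whereas every $T_2$-generator contributes only cross-edges between $T_1$ and $T_2$. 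Once this attribution is corrected, your steps (2)--(4) go through verbatim, the degree count $\Delta(G)=|S\cap T_1|+|S\cap T_2|$ is immediate, and the edge-disjointness of the matchings follows since $i\oplus(j+m)=i\oplus(j'+m)$ for $i\in T_1$ forces $j=j'$.
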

\begin{proof}
The proof is immediate once we note that all generating elements of the form $s=j+m$, where $j\in\{0,1,\ldots,m-1\}$ are sort of reflections (gyro-reflections), that is, satisfy the property $s^{\oplus2}=s\oplus s=(\frac{m}{2}-1)s+(\frac{m}{2}+1)s\pmod m\equiv ms\pmod s \equiv 0$. Hence, all the reflections give rise to perfect matchings in the graph $G$. Since the induced graph on the sets $\{0,1,\ldots,m-1\}$ and $\{m,m+1,\ldots,n-1\}$ are each circulant, therefore, by \cite{STO} Corollary 2.3.1 and the fact that the perfect matchings generated by any elements of the form $s$ can be $1$-factorized, the conclusion is immediate.
\end{proof}


\begin{thebibliography}{99}
	\bibitem{BEH}Behzad, M. (1965). Graphs and their chromatic numbers. Michigan State University.
	\bibitem{CAMP}Campos, C. N., and de Mello, C. P. (2007). A result on the total colouring of powers of cycles. Discrete Applied Mathematics, 155(5), 585-597.
	\bibitem{CHEN}Chen, B.L., Lih, K.W. and Wu, P.L., 1994. Equitable Coloring and the maximum degree. European journal of combinatorics, 15(5), pp.443-447.
 \bibitem{HEU}Heuberger, C. (2003). On planarity and colorability of circulant graphs. Discrete Mathematics, 268(1-3), 153-169.

 \bibitem{MAH}Mahdavi, S., Ashrafi, A. R., Salahshour, M. A., and Ungar, A. A. (2021). Construction of 2-gyrogroups in which every proper subgyrogroup is either a cyclic or a dihedral group. Symmetry, 13(2), 316.
	\bibitem{MAU}Maungchang, R., and Suksumran, T. (2022). On Dihedralized Gyrogroups and Their Cayley Graphs. Mathematics, 10(13), 2276.
		 \bibitem{PRA}Prajnanaswaroopa, S, Geetha, J, and Somasundaram, K. "On total coloring of some classes of regular graphs." Taiwanese Journal of Mathematics 1.1 (2022): 1-17.
\bibitem{PRA2}Prajnanaswaroopa, S., Geetha, J., Somasundaram, K. and Suksumran, T., 2022. Total Coloring of Some Classes of Cayley Graphs on Non-Abelian Groups. Symmetry, 14(10), p.2173.
 \bibitem{STO}Stong, R. A. (1985). On 1-factorizability of Cayley graphs. Journal of Combinatorial Theory, Series B, 39(3), 298-307.
	\bibitem{VIZ1}Vizing, V. G. (1968). Some unsolved problems in graph theory. Russian Mathematical Surveys, 23(6), 125.
	\bibitem{VIZ}Vizing, V. G. (1964). On an estimate of the chromatic class of a p-graph. Diskret analiz, 3, 25-30.
	\bibitem{ZMA}Zmazek, B., and Zerovnik, J. (2002). Behzad-Vizing conjecture and Cartesian-product graphs. Applied mathematics letters, 15(6), 781-784.
	\bibitem{ZOR}Zorzi, A., Figueiredo, C.M.H., Machado, R.C.S., Zatesko, L.M. and Souza, U.S., 2022. Compositions, decompositions, and conformability for total coloring on power of cycle graphs. Discrete Applied Mathematics, 323, pp.349-363.
\end{thebibliography}
\end{document}